\newcommand{\Real}{\mathbb{R}}
\newcommand{\Prb}{\mathbb{P}}
\newcommand{\Esp}{\mathbb{E}}
\newtheorem{theorem}{Theorem}[section]
\newtheorem{assumption}{Assumption}[section]
\theoremstyle{remark}
\definecolor{customOrange}{HTML}{EB8934}
\definecolor{customTeal}{HTML}{538A96}
\definecolor{customBlue}{HTML}{34C9EB}
\definecolor{customBrown}{HTML}{6B5B4D}
\begin{document}

\begin{frontmatter}
\title{Conservative Surrogate Models for Optimization with the Active Subspace Method}

\author[GIREF]{Philippe-André Luneau}
\ead{philippe-andre.luneau.1@ulaval.ca}
\address[GIREF]{Groupe Interdisciplinaire de Recherche en \'El\'ements Finis de l'Universit\'e Laval, D\'epartement de Math\'ematiques et Statistique, Universit\'e Laval, Qu\'ebec, Canada}
\cortext[cor1]{Corresponding author}

\begin{abstract}
    We are interested in building low-dimensional surrogate models to reduce optimization costs, while having theoretical guarantees that the optimum will satisfy the constraints of the full-size model, by making conservative approximations. The surrogate model is constructed using a Gaussian process regression (GPR). To ensure conservativeness, two new approaches are proposed: the first one using bootstrapping, and the second one using concentration inequalities. Those two techniques are based on a stochastic argument and thus will only enforce conservativeness up to a user-defined probability threshold. The method has applications in the context of optimization using the active subspace method for dimensionality reduction of the objective function and the constraints, addressing recorded issues about constraint violations. The resulting algorithms are tested on a toy optimization problem in thermal design.
\end{abstract}

\begin{keyword}
Optimization \sep Surrogate Models \sep Dimensionality Reduction \sep Bootstrap
\end{keyword}

\end{frontmatter}

\section{Introduction}

Optimization of high-fidelity models in engineering and industrial applications is often a complicated task due to the curse of dimensionality. To be sufficiently accurate, the models considered are required to be large-scale and can contain even more than a million variables. Thus, it is highly relevant to study the structure of those optimization problems to see if any variable or region of the search space can be ignored. For instance,~\cite{Cartis_Otemissov_2022,Wang_Hutter_Zoghi_Matheson_De_Freitas_2016} use random embeddings to search for optima in lower-dimensional hyperplanes, and~\cite{Tezzele_Fabris_Sidari_Sicchiero_Rozza_2023} does proper orthogonal decomposition (POD) to reduce the size of the parameter space for a PDE-constrained optimization problem. Another such technique is the active subspace method (ASM)~\cite{Russi_2010}, which restrains the optimization space to a linear manifold encapsulating most of the variation in the quantity of interest. Afterwards, the minimizer in the low-dimensional subspace is pulled back to the full space using regularization techniques~\cite{Lukaczyk_Constantine_Palacios_Alonso_2014}. This method not only is useful for dimensionality reduction in optimization, but also to quantify uncertainty in complex models~\cite{Khatamsaz_Molkeri_Couperthwaite_James_Arróyave_Srivastava_Allaire_2021}. The ASM has often been used in concert with \textit{kriging} or Gaussian process regression (GPR)~\cite{Rasmussen_Williams_2005} to represent the objective on the active subspace to limit computational costs~\cite{Constantine_Dow_Wang_2014}. This entanglement between the two methods has become stronger over the years, as active subspaces have now been used in the context of bayesian optimization using Gaussian processes~\cite{Khatamsaz_Molkeri_Couperthwaite_James_Arróyave_Srivastava_Allaire_2021,Khatamsaz_Allaire_2023}.

In the context of an optimization problem with nonlinear constraints, one could want to apply a dimensionality reduction technique to the constraints as well as to the objective. In~\cite{Lukaczyk_Constantine_Palacios_Alonso_2014} the authors test out such an approach on a geometric optimization problem, which yields interesting results. 
There are two recorded disadvantages to this approach. 
First, the base evaluation cost is very high when compared to applying a regular gradient-based optimizer to the full-size problem, in the case where the reduced model is used only once. The number of evaluations of the objective would be amortized if multiple resolutions were done.
Second, using a GPR approximation for the nonlinear constraint reduces the cost of evaluating it, but no mechanism is put in place so that the exact constraint is actually enforced when the solution to the optimization problem is pulled back to the high-dimensional space (the regularization takes care of bound constraints, but not general nonlinear constraints). They present in \cite{Lukaczyk_Alonso_Constantine_Kochenderfer_Kroo_2015} an updated optimization strategy, driven by an acquisition function taking into account the probability of feasibility of a point, but violation of the constraint could still occur in some applications.

Of those two fallbacks, this work aims to address the latter. Given the optimization problem
\begin{align}
    \min_{x\in X} F(x) \qq{s.t.} G(x) \leq 0,
    \label{eq:nlproblem}
\end{align}
we want to approximate the general nonlinear constraint by a GPR over an active subspace, $\Tilde{G}$, called a \textit{surrogate model} for $G$. A surrogate model is said to be \textit{conservative} if it always overestimates the function it approximates, \textit{i.e.} $G \leq \Tilde{G}$. By ensuring $\Tilde{G}$ is conservative, a solution to the optimization problem with the constraint $\Tilde{G}\leq 0 $ will 
be an admissible solution according to the exact constraint: 
$$\left\{x\in X\,|\, \tilde{G}(x)\le 0\right\} \subset \left\{x\in X\,|\, G(x)\le 0\right\}.$$
Conservative approximations have been studied in the past in many contexts, such as structural mechanics~\cite{Picheny_Kim_Haftka_Peters_2006,Picheny_Kim_Haftka_Queipo_2008, Zhao_Choi_Lee_Gorsich_2013}, aerodynamics \cite{Lukaczyk_Alonso_Constantine_Kochenderfer_Kroo_2015}, and more recently, in hydrogeology~\cite{Zhang_Qiang_Liu_Zhu_Lv_2022} and general risk management~\cite{Jakeman_Kouri_Huerta_2021}. They are useful in many contexts where it is critical not to underestimate quantities of interest. Different techniques have been tried to ensure conservativeness (\textit{e.g.} bootstrapping~\cite{Picheny_Kim_Haftka_Peters_2006}, Akaike information criterion~\cite{Zhao_Choi_Lee_Gorsich_2013}). Most of these approaches rely on adding an artificial bias to the training data of the surrogate model to shift it upwards, making it conservative on a larger subset of the optimization space (or the whole space). This work will follow this trend, and propose two ways to choose a bias ensuring that the surrogate model is conservative \textit{with high probability}. Those new techniques are very simple to implement, as they are based on elementary iterative methods to select the bias needed to obtain a given probability of conservativeness before the optimization process starts. A short presentation of the ASM and GPR will be done, followed by the analysis of the proposed biased ASM, and an application of the algorithm to a toy problem using the library ATHENA~\cite{Romor_Tezzele_Rozza_2021}.

\section{Theoretical setting}

 The approach to ensure $\tilde{G}$ is conservative involves imposing a bias $\beta$ on the training data of the model, as in~\cite{Picheny_Kim_Haftka_Queipo_2008,Jakeman_Kouri_Huerta_2021,Zhang_Qiang_Liu_Zhu_Lv_2022}. Our proposed method consists of two steps:
\begin{enumerate}
    \item The bias will be selected high enough such that the expected value of the difference between the model and the real constraint is at least $\varepsilon>0$, making it conservative \textit{on average};
    \item The bias can be ajusted iteratively until the probability of this difference being $\varepsilon$ away from its expected value is arbitrarily high, using concentration inequalities as a criteria.
\end{enumerate}
Another method using bootstrap \cite{Givens_Hoeting_2012}, inspired by~\cite{Picheny_Kim_Haftka_Peters_2006}, is also presented, and is shown to be much more precise and efficient in a simple case.

When sampling the GPR of the constraint, the value will then be above the exact constraint with a probability as high as desirable. Of course, increasing the bias too much could also make the model far off from the real constraint. This has to be taken into account in any analysis using this method.

Since the numerical models used to evaluate the constraint functions are often very expensive (numerical simulations), a special attention is given to limit as much as possible additional sampling for the training and bootstrap.

\subsection{Active Subspace Method (ASM) with kriging}
The active subspace method relies on four layers of approximations to approximate a function $f:X\subset \Real^D \to \Real$, namely
\begin{align*}
    f(x) \approx f_\mathrm{AS} \approx f_\mathrm{MC} \approx f_{GPR} \approx f_{NGPR}.
\end{align*}
$f_\mathrm{AS}$ will represent $f$ averaged over the inactive subspace. $f_\mathrm{MC}$ represents a Monte Carlo (MC) estimation of $f_\mathrm{AS}$, necessary because of the complexity of the underlying probability measure. Then, $f_\mathrm{GPR}$ will be a GPR interpolation of $f_\mathrm{MC}$, to reduce the computational cost of repeated MC simulations when evaluating the function. Finally, $f_\mathrm{NGPR}$ will induce variance in the surrogate to prevent interpolation of inexact data caused by the successive approximations.

The most fundamental step in ASM is the first approximation, which effectively reduces the dimensionality of the problem. First, $M$ points are sampled in $X$ according to a probability density $\rho$ over $X$. An approximate covariance matrix is approximated using a Monte Carlo (MC) integral estimator
\begin{align*}
    \Sigma_f &= \int_X \nabla f(x) \otimes \nabla f(x) \dd x \\
    &\approx \frac{1}{M} \sum_{i=1}^M \nabla f(x_i) \otimes \nabla f(x_i) = C,
\end{align*}
as an average of rank-one tensors representing local variations in $f$. The eigenvalues of $C$ converge to the eigenvalues of the real covariance matrix $\Sigma_f$ as $M$ grows larger~\citep{Constantine_2015}. Using numerical singular value decomposition (SVD), a principal component analysis of $C$ can be made by diagonalization
\begin{align*}
    C\approx W^\top\Lambda W.
\end{align*}
By choosing the $d<D$ first columns of $W$, one can capture $100\times(\sum_{i=1}^d \lambda_i / \sum_{i=1}^D\lambda_i)\%$ of the variance in $f$. Let this projection be noted $W_1$, and $W = \mqty[W_1, W_2]$.

To approximate $f$ on the low-dimensional subspace $\mathcal{Y} = \qty{y\in\Real^d : y = W_1^\top x, x \in X}$, $f_\mathrm{AS}(y)$ will be given by the average value of $f$ at a given $y$ value, \textit{i.e.} $f_\mathrm{AS}(y) = \Esp[f(W_1 y + W_2z)|y]$ and $f(x) \approx f_\mathrm{AS}(W_1^\top x)$, where
\begin{align*}
     \Esp[f(W_1 y + W_2z)|y] = \int_\mathcal{Z} f(W_1 y + W_2 z)p_{Z|Y}(z) \dd z,
\end{align*}
and $\mathcal{Z} = \qty{z\in\Real^{D-d} : z = W_2^\top x, x \in X}$,
$p_{Z|Y}$ being the conditional density of $z$ knowing $y$. The joint probability density is given by $p_{YZ}(y,z) = \rho(W_1y + W_2z) $, and the integral is simple to approximate by a MC estimator, namely
\begin{align}
    f_\mathrm{AS}(y) \approx f_\mathrm{MC}(y) =\frac{1}{N} \sum_{i=1}^N f(W_1y+W_2 z_i),
    \label{eq:approxAS}
\end{align}
where $z_i \sim p_{Z|Y}$ is generated numerically using a MC method~\cite{Romor_Tezzele_Rozza_2021}. Often, very few samples are needed because $f$ does not vary much on $\mathcal{Z}$ by construction~\cite{Constantine_Dow_Wang_2014}. An illustration of a $d=1$ active subspace in a $D=3$ full space is shown at \cref{fig:activesubspace}.
\begin{figure}
    \centering
    \includegraphics[width=0.3\textwidth]{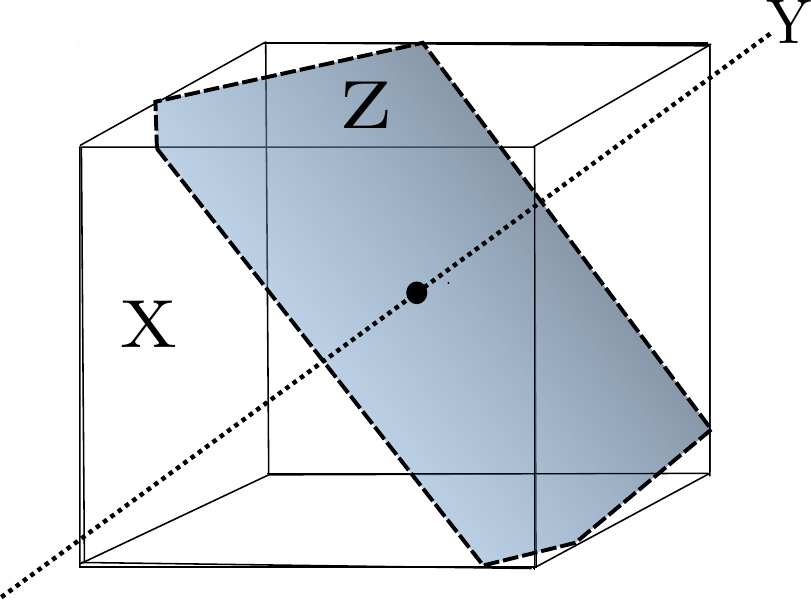}
    \caption{The active subspace $\mathcal{Y}\subset \Real$ in $X=[-1,1]^3$, with the inactive subspace $\mathcal{Z}\subset \Real^2$. The black dot represents the origin.}
    \label{fig:activesubspace}
\end{figure}
Since $f_\mathrm{MC}$ relies on many evaluations of $f$, a response surface or \textit{surrogate} is fitted on the MC estimator, making it possible to approximate it everywhere in $\mathcal{Y}$ with no objective evaluations (after the regression phase). The training data ($S^{(\beta)}$ samples) used to fit the surrogate is noted by 
\begin{align*}
    \mathcal{D}=\qty{y_\mathrm{tr}\in\Real ^{d\times s},f_\mathrm{tr}\in\Real^s : (f_\mathrm{tr})_i = f_\mathrm{MC}((y_\mathrm{tr})_i)}.
\end{align*}
A GPR is used, meaning that at any point in space, the surrogate will follow a space-dependent normal distribution, \textit{i.e.} $f_\mathrm{GPR}(y) | \mathcal{D}\sim \mathcal{N}(\mu(y),\sigma^2(y))$. By using a bayesian approach, one can consider the fitting process as iteratively refining a probabilistic prior on every training data point. A covariance kernel has to be chosen initially, each giving rise to a different model. A classical choice is the squared exponential kernel~\citep{Lukaczyk_Constantine_Palacios_Alonso_2014}, 
\begin{align*}
    k(y_1,y_2) = \exp(-\theta\norm{y_1-y_2}^2),
\end{align*}
where $\theta>0$ is a real parameter to be fitted to the training data. 

A result from~\cite{Constantine_2015} gives an upper bound on the mean squared error (MSE) $\norm{\cdot}_{2,\rho} = \sqrt{\int_X (\cdot)^2 \rho(x) \dd x}$ between $f$ and $f_\mathrm{GPR}$.

\begin{theorem}
Given that:
\begin{itemize}
    \item the 2-norm error on the matrix $W$ computed by SVD is smaller than $\delta_1 >0$;
    \item the MSE between $f_\mathrm{MC}$ and $f_\mathrm{GPR}$ is smaller than $C_2\delta_2$, where $C_2=C_2(X,\rho)$ and $\delta_2=\delta_2(s)$,
\end{itemize}
then $\norm{f - f_\mathrm{GPR}}_{2,\rho}$ is bounded above by
\begin{align*}
    C_1\qty(1+\frac{1}{\sqrt{N}})\qty(\delta_1\sqrt{\sum_{i=1}^d \lambda_i}+\sqrt{\sum_{i=d+1}^D \lambda_i}) + C_2\delta_2,
\end{align*}
where $C_1=C_1(X,\rho)$.
\label{thm:errbound}
\end{theorem}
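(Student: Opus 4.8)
The plan is to chain triangle inequalities along the approximation sequence $f \approx f_\mathrm{AS} \approx f_\mathrm{MC} \approx f_\mathrm{GPR}$, using the quantitative estimates of~\cite{Constantine_2015} for the first two links and the second hypothesis for the last one. Writing $\tilde f_\bullet := f_\bullet \circ W_1^\top$ for the pullback of any of the surrogates to $X$ (so that $\norm{f - f_\mathrm{GPR}}_{2,\rho}$ is understood as $\norm{f - \tilde f_\mathrm{GPR}}_{2,\rho}$), subadditivity of $\norm{\cdot}_{2,\rho}$ gives
\begin{align*}
    \norm{f - \tilde f_\mathrm{GPR}}_{2,\rho} \le \norm{f - \tilde f_\mathrm{AS}}_{2,\rho} + \norm{\tilde f_\mathrm{AS} - \tilde f_\mathrm{MC}}_{2,\rho} + \norm{\tilde f_\mathrm{MC} - \tilde f_\mathrm{GPR}}_{2,\rho},
\end{align*}
and the last term is at most $C_2\delta_2$ by the second hypothesis, so only the first two need to be controlled.

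For $\norm{f - \tilde f_\mathrm{AS}}_{2,\rho}$ I would invoke the Poincaré-type inequality of~\cite{Constantine_2015}, which bounds the $L^2(\rho)$ gap between $f$ and its exact conditional average over the inactive variables by $\sqrt{\sum_{i=d+1}^D\lambda_i}$, up to a constant $C_1 = C_1(X,\rho)$ playing the role of a Poincaré constant of the pair $(X,\rho)$; feeding an inexact projection, with SVD error $\delta_1$, into the same argument contributes, to first order in $\delta_1$, an extra term proportional to $\delta_1\sqrt{\sum_{i=1}^d\lambda_i}$, so that this term is bounded by $C_1\big(\delta_1\sqrt{\sum_{i=1}^d\lambda_i} + \sqrt{\sum_{i=d+1}^D\lambda_i}\big)$. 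For $\norm{\tilde f_\mathrm{AS} - \tilde f_\mathrm{MC}}_{2,\rho}$, recall that $\tilde f_\mathrm{MC}$ is the $N$-sample Monte Carlo estimator~\eqref{eq:approxAS} of that same conditional average; its mean-square error equals the variance of $f$ over the inactive subspace divided by $N$, and bounding that variance by the same eigenvalue quantity yields a contribution $\tfrac{C_1}{\sqrt N}\big(\delta_1\sqrt{\sum_{i=1}^d\lambda_i} + \sqrt{\sum_{i=d+1}^D\lambda_i}\big)$. Adding these two and factoring out the common eigenvalue expression produces exactly the prefactor $C_1(1 + 1/\sqrt N)$; summing the three contributions completes the proof.

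The delicate points lie in the bookkeeping inside the two estimates borrowed from~\cite{Constantine_2015}: how the subspace perturbation $\delta_1$ propagates through the leading eigenvalues $\lambda_1,\dots,\lambda_d$, how the Poincaré constant of $(X,\rho)$ is absorbed into $C_1$, and --- most delicately --- the fact that the Monte Carlo estimator $\tilde f_\mathrm{MC}$ is random, so the bound on $\norm{\tilde f_\mathrm{AS} - \tilde f_\mathrm{MC}}_{2,\rho}$ is most naturally an expectation (or high-probability) statement, which has to be reconciled with the deterministic phrasing of the theorem.
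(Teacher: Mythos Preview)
The paper does not actually prove this theorem; it is stated as a known result from~\cite{Constantine_2015} (and the proof of \cref{thm:errbound2} pinpoints it as Theorem~4.7 there). Your outline is a faithful sketch of that argument: the triangle-inequality split along $f \to f_\mathrm{AS} \to f_\mathrm{MC} \to f_\mathrm{GPR}$, the Poincar\'e-type bound on the first gap in terms of the trailing eigenvalues (with the $\delta_1$ perturbation contributing the leading-eigenvalue term), the Monte Carlo variance producing the $1/\sqrt{N}$ factor on the second gap, and the hypothesis closing the third --- this is exactly the structure of the cited result, so your proposal is correct and matches the intended proof.

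Your closing caveat is well placed: the $\norm{\tilde f_\mathrm{AS} - \tilde f_\mathrm{MC}}_{2,\rho}$ bound in~\cite{Constantine_2015} is indeed stated in expectation over the Monte Carlo draws, and the paper (like the reference) silently absorbs this into a deterministic-looking statement.
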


As mentionned in~\cite{Constantine_Dow_Wang_2014}, this method will interpolate the training data points, which might be somewhate problematic, because the real values $f(x)$ are only approximated by $f_\mathrm{MC}(W_1^\top x)$. Thus, it would be reasonable to have nonzero variance in the GPR, even at the training points. To allow this, a noise term is added to the response surface. The noisy GPR surrogate is 
\begin{align*}
    f_\mathrm{NGPR}(y) = f_\mathrm{GPR}(y) + \nu,
\end{align*}
with an i.i.d. zero-mean Gaussian field
\begin{align*}
    \nu(y) = \nu \sim \mathcal{N}(0,\sigma^2)
\end{align*}
acting as a noise term.
The noisy GPR predictor distribution, knowing $y$, is~\citep{Rasmussen_Williams_2005}
\begin{align*}
    f_\mathrm{NGPR}(y)|\mathcal{D} \sim \mathcal{N}(\mu(y),\Sigma(y)),
\end{align*}
where
\begin{align}
    \mu(y) &= k(y,y_\mathrm{tr})(k(y_\mathrm{tr},y_\mathrm{tr})+\sigma^2I)^{-1}f_\mathrm{tr},\label{eq:mean}\\
    \Sigma(y) &= k(y_\mathrm{tr},y_\mathrm{tr}) - k(y,y_\mathrm{tr})(k(y_\mathrm{tr},y_\mathrm{tr})+\sigma^2I)^{-1}k(y_\mathrm{tr},y).\nonumber
\end{align}

We have to make an assumption that will be useful for the next section.
\begin{assumption}
    The matrix
    $M = (k(y_\mathrm{tr},y_\mathrm{tr})+\sigma^2I)^{-1}$ satisfies
    \begin{align*}
        \sum_{k=1}^s m_{ik} \geq 0 \quad \forall i\in\qty{1,\dots,s},
    \end{align*}
    \textit{i.e.} the sum along every row of $M$ is positive.
    \label{thm:possum}
\end{assumption}

\subsection{Active subspace optimization}
The optimization in the active subspace presented in~\cite{Lukaczyk_Alonso_Constantine_Kochenderfer_Kroo_2015} solves the problem \eqref{eq:nlproblem} by replacing $F$ and $G$ by surrogate models $F_\mathrm{NGPR}$ and $G_\mathrm{NGPR}$ with active subspace mappings $U_1^\top$ and $W_1^\top$ respectively, and solving two problems :
\begin{align}
    \text{Find } (y_F^*,y_G^*)\qq{s.t.} &F_\mathrm{NGPR}(y_F^*) = \min_{y_F\in \mathrm{range}(U_1^\top)} F_\mathrm{NGPR}(y_F),
    \nonumber\\
    &G_\mathrm{NGPR}(y_G^*)\leq0,\label{eq:surroptim}\\
    &c(y_F^*,y_G^*) = 0,\nonumber
\end{align}
and
\begin{align}
    \text{Find } x^*\in X \qq{s.t.} &\norm{x^*}^2 = \min_{x\in X} \norm{x}^2\nonumber\\
    &U_1^\top x = y_F,\label{eq:pullback}\\
    &W_1^\top x = y_G.\nonumber
\end{align}
Problem \eqref{eq:surroptim} aims to find a minimum in the active subspace of $F$ that will also satisfy the constraint in the active subspace of $G$. The constraint $c$ ensures that both points are projections of the same feasible full-space point $x^*$, through the resolution of the quadratic \textit{pullback} problem \eqref{eq:pullback}. This constraint is more precisely defined as the feasibility of \eqref{eq:pullback},
\begin{align*}
    c(y_F,y_G) = \norm{y_F - U_1^\top x^*} + \norm{y_G - W_1^\top x^*} + \mathrm{dist}(x^*,X).
\end{align*}

\section{Methodology}

A few authors have worked before on using GPR with an additional bias. In~\cite{Park_Borth_Wilson_Hunter_Friedersdorf_2022} constant and variable bias models are used to ensure a better fitting of data with many outliers. In~\cite{Zhang_Qiang_Liu_Zhu_Lv_2022} the authors used an approach similar to what we want to achieve (make the approximation conservative to a certain probability), but supposed a very simple Gaussian model for the error, which does not apply in our situation. Finally,~\cite{Zhao_Choi_Lee_Gorsich_2013} uses a non-constant bias based on certain information criteria to ensure conservativeness even in regions with only a few samples. In our case, we will opt for a simple method based on constant bias, but those techniques could be implemented as well to improve our method.

\subsection{Biased ASM}

Adding a positive bias to the estimator $f_\mathrm{MC}$, such that $f^{(\beta)}_\mathrm{MC}(y) = f_\mathrm{MC}(y) + \beta$ with $\beta>0$, will give different training data,
where every function evaluation is shifted upwards, or $f^{(\beta)}_\mathrm{tr} = f_\mathrm{tr} + \beta\vec{\mathbf{1}}$. This will also shift the mean of the GPR surrogate according to \eqref{eq:mean}, similar to~\cite{Park_Borth_Wilson_Hunter_Friedersdorf_2022}:
\begin{align}
    \mu^{(\beta)}(y) = \mu(y) + \beta k(y,y_\mathrm{tr})(k(y_\mathrm{tr},y_\mathrm{tr})+\sigma^2I)^{-1}\vec{\mathbf{1}}.
    \label{eq:predmean}
\end{align}
We will note $f^{(\beta)}_\mathrm{GPR}$ and $f^{(\beta)}_\mathrm{NGPR}$ the noiseless and noisy GPR approximations trained on the biased data. The next theorem concerns how the bias influences the approximation error.
\begin{theorem}\label{thm:errbound2}
Under the assumptions of \cref{thm:errbound}, $\norm{f-f^{(\beta)}_\mathrm{GPR}}_{2,\rho}$ is bounded above by
    \begin{align*}
    C_1\qty(1+\frac{1}{\sqrt{N}})\qty(\delta_1\sqrt{\sum_{i=1}^d \lambda_i}+\sqrt{\sum_{i=d+1}^D \lambda_i}) + C_2\delta_2 + \beta.
\end{align*}
\end{theorem}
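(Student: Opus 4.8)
The plan is to deduce the bound from \cref{thm:errbound} applied to the shifted target $\hat f := f + \beta$. The key remark is that $f^{(\beta)}_\mathrm{GPR}$ is exactly the surrogate obtained by running the whole ASM--GPR construction on $\hat f$: since $\nabla\hat f = \nabla f$, the Monte Carlo covariance $C$, its SVD, the projector $W_1$ and every eigenvalue $\lambda_i$ coincide with those of $f$; then $\hat f_\mathrm{AS}(y) = \Esp[\hat f(W_1 y + W_2 z)\mid y] = f_\mathrm{AS}(y) + \beta$, $\hat f_\mathrm{MC} = f_\mathrm{MC} + \beta$, and the GPR trained on $\hat f_\mathrm{tr} = f_\mathrm{tr} + \beta\vec{\mathbf{1}}$ is precisely $f^{(\beta)}_\mathrm{GPR}$, with mean given by the shifted formula \eqref{eq:predmean}.

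First I would check that the hypotheses of \cref{thm:errbound} hold for $\hat f$. The bound $\delta_1$ on the SVD error of $W$ transfers verbatim, since $W$ is literally the same matrix. The second hypothesis asks that the MSE between $\hat f_\mathrm{MC}$ and $f^{(\beta)}_\mathrm{GPR}$ be at most $C_2\delta_2$ with $C_2 = C_2(X,\rho)$ and $\delta_2 = \delta_2(s)$; since the stated form of this bound involves only $X$, $\rho$ and the number of training points $s$, not the values of the data, it applies to the biased pair as well. \cref{thm:errbound} then gives
\begin{align*}
    \norm{\hat f - f^{(\beta)}_\mathrm{GPR}}_{2,\rho} \le C_1\qty(1+\tfrac{1}{\sqrt N})\qty(\delta_1\sqrt{\sum_{i=1}^d\lambda_i} + \sqrt{\sum_{i=d+1}^D\lambda_i}) + C_2\delta_2 .
\end{align*}

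Finally I would undo the shift. Because $\rho$ is a probability density on $X$, $\norm{\beta}_{2,\rho}^2 = \int_X \beta^2 \rho(x)\dd x = \beta^2$, so by the triangle inequality for $\norm{\cdot}_{2,\rho}$,
\begin{align*}
    \norm{f - f^{(\beta)}_\mathrm{GPR}}_{2,\rho} \le \norm{\hat f - f^{(\beta)}_\mathrm{GPR}}_{2,\rho} + \norm{\beta}_{2,\rho} \le C_1\qty(1+\tfrac1{\sqrt N})\qty(\delta_1\sqrt{\sum_{i=1}^d\lambda_i} + \sqrt{\sum_{i=d+1}^D\lambda_i}) + C_2\delta_2 + \beta ,
\end{align*}
which is the asserted inequality.

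The step that needs the most care — the \emph{main obstacle} — is the transfer of the second hypothesis of \cref{thm:errbound} to the biased data: one must accept that the GPR fits the shifted Monte Carlo samples to within the same tolerance $C_2\delta_2$. With the hypothesis phrased as in \cref{thm:errbound} this is immediate, but a self-contained argument would instead require estimating $\norm{f^{(\beta)}_\mathrm{GPR} - f_\mathrm{GPR} - \beta}_{2,\rho}$, i.e. how far the GPR interpolant of the constant vector $\beta\vec{\mathbf{1}}$ — namely $\beta\,k(y,y_\mathrm{tr})(k(y_\mathrm{tr},y_\mathrm{tr})+\sigma^2 I)^{-1}\vec{\mathbf{1}}$ by \eqref{eq:predmean} — strays from the constant $\beta$. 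Here \cref{thm:possum} together with positivity of the squared-exponential kernel controls this quantity from below (the interpolant is nonnegative and, at the training nodes, equals $\beta$ up to the $\sigma^2$ correction), the upper estimate being the delicate direction; either way the bias adds at most an $O(\beta)$ term, which is exactly what the theorem claims.
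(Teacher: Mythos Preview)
Your proof is correct and follows essentially the same route as the paper: both separate out the constant $\beta$ by the triangle inequality (using $\norm{\beta}_{2,\rho}=\beta$) and then invoke the $\kappa + C_2\delta_2$ bound for the biased surrogate, relying on the fact that the second hypothesis of \cref{thm:errbound} transfers to the shifted data since $C_2=C_2(X,\rho)$ and $\delta_2=\delta_2(s)$ are data-independent. The only cosmetic difference is that you apply \cref{thm:errbound} as a black box to $\hat f=f+\beta$, whereas the paper splits at $f^{(\beta)}_\mathrm{MC}$ and cites the underlying ingredients directly; your closing paragraph about controlling $\norm{f^{(\beta)}_\mathrm{GPR}-f_\mathrm{GPR}-\beta}_{2,\rho}$ is an unnecessary detour that neither argument actually needs.
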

\begin{proof}
    \begin{align*}
        \norm{f-f^{(\beta)}_\mathrm{MC}}_{2,\rho} &= \norm{f-f_\mathrm{MC} - \beta}_{2,\rho} \\
        &\leq \norm{f-f_\mathrm{MC}}_{2,\rho} + \norm{\beta}_{2,\rho}. 
    \end{align*}
    According to~\cite[theorem 4.7]{Constantine_2015}, the first term is bounded by 
    \begin{align*}
        \kappa : = C_1\qty(1+\frac{1}{\sqrt{N}})\qty(\delta_1\sqrt{\sum_{i=1}^d \lambda_i}+\sqrt{\sum_{i=d+1}^D \lambda_i}),
    \end{align*}
    and the second term is equal to $\beta$. Under the second hypothesis of \cref{thm:errbound}, 
    we get 
    \begin{align*}
        \norm{f-f^{(\beta)}_\mathrm{GPR}}_{2,\rho} &\leq \norm{f-f^{(\beta)}_\mathrm{MC}}_{2,\rho} + \norm{f^{(\beta)}_\mathrm{MC}-f^{(\beta)}_\mathrm{GPR}}_{2,\rho}
        \leq \kappa + \beta + C_2\delta_2.
    \end{align*}
\end{proof}

\Cref{thm:errbound2} informs us that the approximation error will increase linearly with the bias. This reinforces the idea that the bias must me somewhat minimal even though we want it to be nonzero; there must be a balance between conservativeness and exactness.

We want the surrogate to be greater than the real function for all $x \in X$, meaning the inactive variable must be taken into account as well :
\begin{align*}
    f^{(\beta)}_\mathrm{NGPR}(y) \geq f(W_1 y + W_2 z),
\end{align*}
for any $z\in \mathcal{Z}$ such that $W_1 y + W_2 z \in X$. Fortunately, we wont use latent space sampling from the GPR distribution, and simply use its mean $\mu^{(\beta)}(y)$ as a deterministic surrogate.

Denote the signed distance random variable by $S^{(\beta)} = \mu^{(\beta)}(W_1^\top X) - f(X)$ with $X\sim \rho$, or equivalently,
\begin{align*}
    S^{(\beta)} = \mu^{(\beta)}(Y) - f(W_1Y + W_2Z)
\end{align*}
where $Y\sim p_Y$ and $Z\sim p_{Z|Y}$. For the approximation to be conservative a.s., $S^{(\beta)}$ would have to be a positive random variable. As discussed before, we relax this requirement, and we try to determine a bias value $\beta$ that will make $S^{(\beta)}$ positive with \textit{high-probability}, meaning that by tuning $\beta$ such that $\Prb[S\geq 0] \to 1$ as $\beta\to \infty$, we can achieve a ``conservativeness'' probability as close to 1 as we want. The following results justifies this method.

\begin{theorem}
    Consider $\psi(\beta) = \Prb[S^{(\beta)} > 0]$. Under \cref{thm:possum} and sufficient regularity of $f$, $k$ and $\rho$,
    \begin{itemize}[$-$]
        \item $\psi(\beta)$ is increasing;
        \item for any $\beta$ large enough, $\psi(\beta) = 1$;
        \item $\psi(\beta)$ is continuous.
    \end{itemize}
    \label{thm:increase}
\end{theorem}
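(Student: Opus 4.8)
The plan is to collapse all three assertions into statements about the distribution function of a single scalar random variable. First I would isolate the $\beta$-dependence of the surrogate: writing $g(y):=k(y,y_\mathrm{tr})(k(y_\mathrm{tr},y_\mathrm{tr})+\sigma^2 I)^{-1}\vec{\mathbf{1}}$, equation \eqref{eq:predmean} reads $\mu^{(\beta)}(y)=\mu(y)+\beta\,g(y)$, so $S^{(\beta)}=\mu(Y)+\beta\,g(Y)-f(W_1Y+W_2Z)$. The crucial point is positivity of $g$: by \cref{thm:possum} the vector $(k(y_\mathrm{tr},y_\mathrm{tr})+\sigma^2 I)^{-1}\vec{\mathbf{1}}$ has nonnegative entries and is nonzero (the matrix being invertible), while every entry of the row $k(y,y_\mathrm{tr})$ is strictly positive for the squared-exponential kernel; hence $g(y)>0$ for all $y$. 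With enough regularity on $\rho$ that $X$, and therefore $\mathcal{Y}=W_1^\top X$, is compact, the continuous function $g$ attains a minimum $g_{\min}>0$ on $\mathcal{Y}$. Dividing $S^{(\beta)}>0$ by $g(Y)$ gives
\[
    \psi(\beta)=\Prb\!\left[V<\beta\right],\qquad V:=\frac{f(W_1Y+W_2Z)-\mu(Y)}{g(Y)},
\]
so $\psi$ is the left-continuous version of the distribution function of the real random variable $V$.

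Monotonicity and eventual saturation are then immediate. For $\beta_1\le\beta_2$ one has $\{V<\beta_1\}\subseteq\{V<\beta_2\}$, hence $\psi(\beta_1)\le\psi(\beta_2)$. For saturation, $x\mapsto f(x)-\mu(W_1^\top x)$ is continuous on the compact set $X$ and hence bounded, say by $C$; since $g(Y)\ge g_{\min}$ this forces $V\le C/g_{\min}=:\beta^\star$ almost surely, so $\psi(\beta)=\Prb[V<\beta]=1$ for every $\beta>\beta^\star$.

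For continuity I would use that a monotone, left-continuous function fails to be continuous at $\beta$ only through a jump of size $\psi(\beta^+)-\psi(\beta)=\Prb[V\le\beta]-\Prb[V<\beta]=\Prb[V=\beta]$. Now $\Prb[V=\beta]=\Prb[\,f(X)=\mu(W_1^\top X)+\beta\,g(W_1^\top X)\,]$ with $X\sim\rho$, i.e.\ the $\rho$-mass of the level set $L_\beta:=\{x\in X:f(x)-\mu(W_1^\top x)-\beta\,g(W_1^\top x)=0\}$. Assuming $\rho$ absolutely continuous with respect to Lebesgue measure, it suffices that $L_\beta$ be Lebesgue-null, and this is exactly where "sufficient regularity of $f$, $k$, $\rho$" enters: if $f$ and the finite kernel combinations $\mu,g$ are real-analytic (or $C^1$ with $0$ a regular value of $x\mapsto f(x)-\mu(W_1^\top x)-\beta\,g(W_1^\top x)$), and this function is not identically zero, then $L_\beta$ is a proper analytic subvariety (resp.\ a $(D-1)$-dimensional submanifold) and hence has measure zero. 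Therefore $\Prb[V=\beta]=0$ for every $\beta$, $\psi$ has no jumps, and being monotone and left-continuous it is continuous.

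The main obstacle is precisely this last point: turning "sufficient regularity" into a verifiable hypothesis that simultaneously (i) guarantees no level set $L_\beta$ carries positive $\rho$-mass and (ii) excludes the degenerate case in which $f$ coincides identically on $X$ with some affine-in-$g$ function of the active variable (in which case $V$ is constant and $\psi$ a genuine step function). Everything else is bookkeeping around the factorization $S^{(\beta)}=S^{(0)}+\beta\,g(Y)$; if one wants $\psi$ to be \emph{strictly} increasing on $\{\beta:\psi(\beta)<1\}$ rather than merely non-decreasing, one needs in addition that the support of $V$ be an interval, which again follows from connectedness of $X$ together with the regularity of $f$, $\mu$ and $g$.
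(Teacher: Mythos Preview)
Your monotonicity and saturation arguments are essentially the paper's: it too writes $S^{(\beta_2)}=S^{(\beta_1)}+\Delta\beta\cdot g(W_1^\top X)$, uses \cref{thm:possum} for the sign, and for saturation bounds the numerator by $\|\mu\|_\infty+\|f\|_\infty$ and the denominator by the positive quantity $g$. The difference is in the continuity step. The paper does \emph{not} divide through by $g(Y)$; instead it keeps the pair $U:=\mu(W_1^\top X)-f(X)$ and $V:=g(W_1^\top X)$, writes
\[
\psi(\beta)=\int_{-\infty}^{\infty}\!\int_{-\beta v}^{\infty} f_{UV}(u,v)\,du\,dv,
\]
and appeals to the Leibniz integral rule under ``sufficient smoothness of $f_{UV}$''. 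Your route---reduce $\psi$ to the (left-)CDF of the single scalar $-U/V$ and show it is atomless via a level-set / regular-value argument---is a genuinely different and arguably more transparent way to cash out ``sufficient regularity'': it pinpoints that continuity amounts to $\Prb[f(X)=\mu(W_1^\top X)+\beta\,g(W_1^\top X)]=0$, whereas the paper's formulation quietly presupposes that $(U,V)$ admits a joint density at all, which is a stronger hypothesis and not automatic when $V$ is a smooth function of only the $d$ active coordinates. Conversely, the paper's integral representation, once granted, gives differentiability of $\psi$ for free, not just continuity. Both arguments are correct under their respective readings of the regularity clause; yours is more explicit about where the assumption bites.
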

\begin{proof}
    Take $0<\beta_1 < \beta_2$ with $\Delta\beta = \beta_2 - \beta_1$. Then, by equation \eqref{eq:predmean},
    \begin{align*}
        \Prb\qty[S^{(\beta_2)}>0] = \Prb\qty[\mu^{(\beta_2)}(W_1^\top X) - f(X) >0]\\
        = \Prb\qty[S^{(\beta_1)} + \underbrace{\Delta\beta k(y,y_\mathrm{tr})(k(y_\mathrm{tr},y_\mathrm{tr})+\sigma^2I)^{-1}\vec{\mathbf{1}}}_{R} >0].
    \end{align*}
    By \cref{thm:possum}, $R$ is positive. Thus, $\Prb[S^{(\beta_1)} + R > 0] \geq \Prb[S^{(\beta_1)} > 0]$ and $\Prb[S^{(\beta_2)} > 0] \geq \Prb[S^{(\beta_1)} > 0]$. Moreover, by taking $\beta > \frac{\norm{\mu}_\infty + \norm{f}_\infty}{k(W_1^\top X,y_\mathrm{tr})(k(y_\mathrm{tr},y_\mathrm{tr})+\sigma^2I)^{-1}\vec{\mathbf{1}}}$, $S^{(\beta)} >0$ everywhere by \eqref{eq:predmean}, making $\psi(\beta) = 1$. Finally, recall that
    \begin{align*}
        \Prb[S^{(\beta)}>0] = \Esp[\mathbbm{1}_\qty{S^{(\beta)}>0}] = \int_{-\infty}^\infty \int_{-\beta v}^{\infty} f_{UV}(u,v)\dd u \dd v, 
    \end{align*}
    where $U = \mu(W_1^\top X) - f(X)$ and $V = k(W_1^\top X,y_\mathrm{tr})(k(y_\mathrm{tr},y_\mathrm{tr})+\sigma^2I)^{-1}\vec{\mathbf{1}}$, so that $S^{(\beta)} = U + \beta V$. By Leibniz integral theorem, $\psi(\beta)$ is continuous as long as $f_{UV}$ is sufficiently smooth.
\end{proof}

\subsection{Probability estimation using Chernoff bounds}
The first method proposed to select the appropriate bias will start by finding $\beta$ such that the expected value of $S^{(\beta)}$ is positive. The following result shows that such a value exists.

\begin{theorem}
    Under \cref{thm:possum}, for every $\varepsilon>0$, there exists a bias $\beta=\Omega(\varepsilon)$ such that $\Esp\qty[S]\geq \varepsilon$.
    \label{thm:bias}
\end{theorem}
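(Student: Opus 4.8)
The plan is to exploit the affine dependence of $S^{(\beta)}$ on $\beta$. Expanding \eqref{eq:predmean} exactly as in the proof of \cref{thm:increase}, write $S^{(\beta)} = U + \beta V$ with
\begin{align*}
    U = \mu(W_1^\top X) - f(X), \qquad V = k(W_1^\top X, y_\mathrm{tr})(k(y_\mathrm{tr},y_\mathrm{tr})+\sigma^2 I)^{-1}\vec{\mathbf{1}}, \qquad X\sim\rho.
\end{align*}
By linearity of expectation $\Esp[S^{(\beta)}] = \Esp[U] + \beta\,\Esp[V]$, so the whole statement reduces to two facts: $\Esp[U]$ is finite, and $\Esp[V] > 0$. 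Granting these, set $\beta := (\varepsilon - \Esp[U])/\Esp[V]$ (taking $\beta = 0$ in the trivial case $\varepsilon \le \Esp[U]$, which does not occur for $\varepsilon$ large); then $\Esp[S^{(\beta)}] = \varepsilon$, and the elementary bounds
\begin{align*}
    \frac{\varepsilon - L}{\Esp[V]} \le \beta \le \frac{\varepsilon + L}{\Esp[V]}, \qquad L := \norm{\mu}_\infty + \norm{f}_\infty,
\end{align*}
exhibit $\beta$ as $\Theta(\varepsilon)$, in particular $\beta = \Omega(\varepsilon)$.

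Finiteness of $\Esp[U]$ is where the "sufficient regularity of $f$, $k$ and $\rho$" is used: $f$ is continuous on the compact set $X$ and $\mu$ is continuous on the compact image $\mathcal{Y} = W_1^\top X$, hence both are bounded, so $|U| \le L < \infty$ pointwise and $|\Esp[U]| \le L$ because $\rho$ is a probability density.

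The core of the argument is $\Esp[V] > 0$. Expanding the vector--matrix--vector product, $V$ equals the deterministic function $y \mapsto \sum_{j=1}^s r_j\, k(y,(y_\mathrm{tr})_j)$ evaluated at $y = W_1^\top X$, where $r_j := \sum_{k=1}^s m_{jk}$ is the $j$-th row sum of $M = (k(y_\mathrm{tr},y_\mathrm{tr})+\sigma^2 I)^{-1}$. By \cref{thm:possum} each $r_j \ge 0$; and since $M$ is invertible we cannot have $M\vec{\mathbf{1}} = 0$, so $r_{j_0} > 0$ for at least one index $j_0$. As the kernel is strictly positive (for the squared exponential kernel $k(y_1,y_2) = \exp(-\theta\norm{y_1-y_2}^2) > 0$), this gives $V \ge r_{j_0}\,k(W_1^\top X,(y_\mathrm{tr})_{j_0}) > 0$ pointwise, and integrating against $\rho$ yields $\Esp[V] > 0$.

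I expect the only delicate step to be this strict positivity of $\Esp[V]$: \cref{thm:possum} by itself only delivers $\Esp[V] \ge 0$, and one must additionally invoke the invertibility of $M$ (to rule out every row sum vanishing simultaneously) together with positivity of the kernel. If one wanted to accommodate a kernel that is only nonnegative, the corresponding step would instead require showing that $V$ does not vanish $\rho$-almost everywhere on $W_1^\top X$; everything else is routine.
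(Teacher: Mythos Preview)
Your proof is correct and follows essentially the same route as the paper: decompose $\Esp[S^{(\beta)}]$ into an affine function of $\beta$ and solve the resulting linear inequality. In fact your treatment of the strict positivity of $\Esp[V]$ is more careful than the paper's, which simply asserts that \cref{thm:possum} makes the denominator positive; as you rightly observe, the assumption alone only gives nonnegativity, and one needs the invertibility of $M$ together with strict positivity of the kernel to rule out $\Esp[V]=0$.
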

\begin{proof}
By linearity,
\begin{align*}
    \Esp[S] = \Esp[\mu^{(\beta)}(Y)] - \Esp[f(W_1Y + W_2Z)].
\end{align*}
By the law of total expectation applied to both terms,
\begin{align*}
    \Esp[S] &= \Esp_Y[\Esp[\mu^{(\beta)}(Y)|Y=y]] - \Esp_Y[\Esp[f(W_1Y + W_2Z)|Y=y]]\\
    &=\Esp_Y[\mu^{(\beta)}(y)] - \Esp_Y[f_\mathrm{AS}(y)].
\end{align*}
The following condition must be satisfied by $\beta$ for the result to be true:
\begin{align*}
    \Esp_Y[\mu(y)] + \beta\Esp_Y[k(y,y_\mathrm{tr})(k(y_\mathrm{tr},y_\mathrm{tr})+\sigma^2I)^{-1}\vec{\mathbf{1}}] - \Esp_Y[f_\mathrm{AS}(y)] \geq \varepsilon,
\end{align*}
which is satisfied for
\begin{align*}
    \beta \geq \frac{\varepsilon - \Esp_Y[\mu(y)-f_\mathrm{AS}(y)]}{\Esp_Y[k(y,y_\mathrm{tr})(k(y_\mathrm{tr},y_\mathrm{tr})+\sigma^2I)^{-1}\vec{\mathbf{1}}]} .
\end{align*}
\Cref{thm:possum} ensures that the denominator is positive.
\end{proof}

This theorem ensures that by choosing a large enough bias, the expected value of $S^{(\beta)}$ will be positive. Moreover, the probability that $S^{(\beta)}$ is at least $\varepsilon$ away from its expected value can be computed using concentration inequalities or tail bounds, notably the Markov, Chebyshev and Chernoff inequalities~\cite{Boucheron_Lugosi_Massart_2013}. We mention the latter as it will be useful later on.
\begin{theorem}[Chernoff Bound]
    Let $X$ be a r.v. with finite moments and $M_X(t) = \Esp\qty[e^{tX}]$ its moment generating function. For all $t>0$,
    \begin{align*}
        \Prb[X>t] \leq \inf_{u>0} M_X(u)e^{-ut}.
    \end{align*}
\end{theorem}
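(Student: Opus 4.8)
The plan is to derive the Chernoff bound as a direct consequence of Markov's inequality applied to the exponentially transformed random variable. First I would fix an arbitrary $u > 0$ and observe that, since $x \mapsto e^{ux}$ is a strictly increasing function on $\mathbb{R}$, the event $\{X > t\}$ coincides with the event $\{e^{uX} > e^{ut}\}$. Thus $\Prb[X > t] = \Prb[e^{uX} > e^{ut}]$.

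Next I would apply Markov's inequality to the nonnegative random variable $e^{uX}$, whose expectation is $M_X(u)$ by definition and is finite by the hypothesis that $X$ has finite moments (more precisely, one assumes $M_X(u) < \infty$ on the relevant range of $u$; this is exactly the ``finite moment generating function'' assumption). Markov's inequality gives
\begin{align*}
    \Prb[e^{uX} > e^{ut}] \leq \frac{\Esp[e^{uX}]}{e^{ut}} = M_X(u)e^{-ut}.
\end{align*}
Combining the two displays yields $\Prb[X > t] \leq M_X(u)e^{-ut}$ for every $u > 0$.

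Finally, since the left-hand side does not depend on $u$ while the right-hand side does, the inequality remains valid after taking the infimum over all admissible $u > 0$, which gives
\begin{align*}
    \Prb[X > t] \leq \inf_{u>0} M_X(u)e^{-ut}.
\end{align*}

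There is essentially no obstacle here: the only point requiring mild care is ensuring that $e^{uX}$ is integrable so that Markov's inequality applies, which is precisely what the finiteness of the moment generating function guarantees; for values of $u$ where $M_X(u) = \infty$ the bound is vacuously true, so the infimum is unaffected. One could alternatively cite this as a standard fact from \cite{Boucheron_Lugosi_Massart_2013}, but the two-line derivation above is self-contained.
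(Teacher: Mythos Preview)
Your proof is correct and is precisely the standard derivation of the Chernoff bound via Markov's inequality applied to $e^{uX}$. The paper itself does not prove this statement; it simply states it as a classical result with a reference to \cite{Boucheron_Lugosi_Massart_2013}, so your self-contained argument is already more than what the paper provides.
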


The Chernoff inequality has an advantage over the Chebyshev inequality because it is generally much tighter since the decay can be exponential instead of algebraic. The Chernoff bound will allow us to determine a criterion to increase iteratively $\varepsilon$ until the probability of being conservative is as high as desired. Indeed, since by \cref{thm:bias} we can pick $\varepsilon$ such that $\Esp[S]\geq \varepsilon$, we can try to ensure that $S^{(\beta)}$ is at most $\varepsilon$ away from its expected value with high-probability:
\begin{align}
    \Prb[\abs{S-\Esp[S]}>\varepsilon] &\leq \inf_{u>0}\Esp\qty[e^{u\abs{S-\Esp[S]}}]e^{-u\varepsilon}\nonumber\\
    &\approx \inf_{u>0}\frac{1}{n}\sum_{k=1}^n \exp{u\qty(\abs{s_k-\Esp_\mathrm{boot}[S]}-\varepsilon)}\nonumber\\
    &= \chi(\varepsilon)\label{eq:approxMoment}
\end{align}
for $s_k$ sampled from $S^{(\beta)}$, and $\Esp_\mathrm{boot}[S]$ a bootstrap estimator for the expectation of $S^{(\beta)}$. Then, $\Prb[\abs{S-\Esp[S]}<\varepsilon] \geq 1 - \chi(\varepsilon)$. By setting a tolerance on the probability of success, \textit{e.g.} $\tau = 0.95$, we can iteratively increase $\varepsilon$ until $1-\chi(\varepsilon)\geq \tau$. The probability will eventually converge to 1, as increasing $\varepsilon$ should not increase the variance of $S^{(\beta)}$, thus the coefficients in the exponentials should all eventually become negative and $\chi(\varepsilon) = 0$ as $\varepsilon\to \infty$. Also, note that $1-\chi(\varepsilon)$ is underestimating $\Prb[S^{(\beta)}>0]$, which we will analyze the impact of in \cref{sec:numerical}.

In practice, we will not make use of the parameter $\varepsilon$, but work directly on $\beta$. To obtain a bias value that will satisfy tightly the success probability tolerance, a bisection algorithm~\cite{Quarteroni_Saleri_Gervasio_2014} will be applied on $\beta$ until $1-\chi(\Esp_\mathrm{boot}[S])$ is close enough to $\tau$ (\cref{algo:casm}). The initial bias value has to be taken large enough such that $\Esp_\mathrm{boot}[S]$ is positive, then taking $\varepsilon = \Esp_\mathrm{boot}[S]$ makes sense to ensure conservativeness.

\begin{algorithm}
\textbf{Input: }large $\beta_m$ upper bound for the bias, $f$ the function to be approximated, $\tau$ success probability tolerance, $\delta$ iterative tolerance
\BlankLine
Compute $f_\mathrm{NGPR}$ using standard ASM\;
$n=0$\;
$\beta_l = 0$\;
$\beta_0 = \beta_m / 2$\;
\While{$\abs{1-\chi(\Esp_\mathrm{boot}[S]) -\tau}>\delta$}{
Compute $f^{(\beta)}_\mathrm{NGPR}$ with $\beta_n$\;
Generate a large sample from $S^{(\beta)}$\;
Compute $\Esp_\mathrm{boot}[S]$\;
\If{$\Esp_\mathrm{boot}[S]\leq 0$}{
$\beta_l = \beta_n$\;
$\beta_{n+1} = (\beta_l + \beta_m)/2$\;
}
\Else{
\If{$1-\chi(\Esp_\mathrm{boot}[S]) > \tau$}{$\beta_{m} = \beta_n$\;}
\Else{$\beta_l = \beta_n$\;}
$\beta_{n+1} = (\beta_l + \beta_m)/2$\;
}
$n\leftarrow n+1$\;
}
\Return{$f^{(\beta)}_\mathrm{NGPR}$}\;
\caption{Conservative Active Subspace Method (CASM) - Tail bounds approach}
\label{algo:casm}
\end{algorithm}

\Cref{thm:increase} ensures \cref{algo:casm} converges to the unique bias value yielding the success probability $1-\chi(\Esp_\mathrm{boot}[S])$, under the condition that $\beta_m$ is large enough so $\Prb[S^{(\beta_m)} >0 ] = 1$ and that $\tau$ is not lower than the degree of conservativeness given by $f_\mathrm{NGPR}$.

\subsection{Probability estimation using bootstrap}
\cite{Picheny_Kim_Haftka_Peters_2006} uses bootstrapping to estimate some scalar range for parameters to be higher than a given failure probability, at an arbitrary high user tolerance. Here, we also use bootstrap to estimate the probability of a failure, but to determine the appropriate bias to apply to our surrogate.

An important remark is that the Chernoff inequality might underestimate the conservativeness probability. Thus, we propose a second approach to estimate the desired quantity using Monte Carlo integration. Recall that
\begin{align*}
    \Prb[S>0] = \Esp[\mathbbm{1}_\qty{S>0}],
\end{align*}
where $\mathbbm{1}_U$ is the indicator function of the set $U$. By generating $\qty{s_k : k=1,\dots,K}$ a large sample from $S^{(\beta)}$, we can, by resampling, estimate
\begin{align*}
    \Esp[\mathbbm{1}_\qty{S>0}] \approx  \Esp_\mathrm{boot}[\mathbbm{1}_\qty{S>0}] = \frac{1}{B}\sum_{b=1}^B \frac{\#\qty{s^{(b)}_k > 0 : k\leq K}}{K}.
\end{align*}

\begin{algorithm}
\textbf{Input: }large $\beta_m$ upper bound for the bias, $f$ the function to be approximated, $\tau$ success probability tolerance, $\delta$ iterative tolerance
\BlankLine
Compute $f_\mathrm{NGPR}$ using standard ASM\;
$n=0$\;
$\beta_l = 0$\;
$\beta_0 = \beta_m / 2$\;
\While{$\abs{E_\beta -\tau}>\delta$}{
Compute $f^{(\beta)}_\mathrm{NGPR}$ with $\beta_n$\;
Generate a sample from $S^{(\beta)}$\;
Compute $E_\beta = \Esp_\mathrm{boot}[\mathbbm{1}_\qty{S>0}]$\;
\If{$E_\beta > \tau$}{$\beta_{m} = \beta_n$\;}
\Else{$\beta_l = \beta_n$\;}
$\beta_{n+1} = (\beta_l + \beta_m)/2$\;
}
$n\leftarrow n+1$\;
\Return{$f^{(\beta)}_\mathrm{NGPR}$}\;
\caption{Conservative Active Subspace Method (CASM) - Sampling approach}
\label{algo:casmsamp}
\end{algorithm}
\Cref{algo:casmsamp} converges under the same restrictions as \cref{algo:casm}.

\subsection{Estimation of $S^{(\beta)}$}

Since the evaluation of $f$ might be extremely expensive (\textit{e.g.} high-fidelity aerodynamics simulation), we want to limit the number of times the algorithm has to evaluate $f$. Unfortunately, $S^{(\beta)}$ depends directly on this function, so sampling from $S^{(\beta)}$ ``on the fly'' would be prohibitively expensive. Since $f$ has to be evaluated already many times for the fitting of the surrogate $f_\mathrm{NGPR}$, we wish to reuse those values to estimate the density of $S^{(\beta)}$. To improve the quality of the estimations, we rely on bootstrapping.

We start by sampling $x_k$ uniformly over $X$. The training data for $f_\mathrm{NGPR}$ is obtained by $f_\mathrm{MC}$, as in \eqref{eq:approxAS}, which relies on evaluations of $f(W_1 y + W_2 z)$ at some points $(y_k,z_{ik})$ where $y_k = W_1^\top x_k$, and $z_{ik}$ are sampled from $p_{Z|Y}(z|y_k)$. We tabulate these values as $f_{ik} = f(W_1 y_k+W_2 z_{ik})$. This allows us to generate samples from $S^{(\beta)}$ of the form 
\begin{align*}
    s_{k} &= \mu^{(\beta)}(y_k) - f_{ik},
\end{align*}
where $i$ is picked at random (uniformly) within the index value range. The bootstrap estimators needed are obtained by resampling on these values.

\section{Numerical Experiments}
\label{sec:numerical}

This section will address the  usage of both versions of CASM to estimate optimization constraints. The obtained admissible regions will be analyzed for a synthetic example and the limitations of the methods will be exposed. Then, the method will be tested on a toy thermal design instance.

\subsection{Feasible regions analysis}

The experiments are done with the Python library ATHENA (Advanced Techniques for High dimensional parameter spaces to Enhance Numerical Analysis)~\cite{Romor_Tezzele_Rozza_2021}. This library has all the available tools to do ASM.
As an example, consider the function $G:X \to \Real$ defined as
\begin{align*}
    G(x) = (x_1+2x_2)^2 + x_1 - x_2 - 3,
\end{align*}
with $X=[-1,1]^2$ ($D=2$) and $\rho$ is a uniform distribution over $X$. Using the ASM available within ATHENA, we choose an active subspace of dimension $d=1$, which is spanned by the eigenvector $v_\lambda = \mqty(0.38417958,0.92325839)^\top$, with an eigenvalue of $\lambda = 36.77$ (the eigenvalue associated with the inactive subspace is $3.04$). A visualization of the active and inactive subspaces is available at \cref{fig:GAS}.

\begin{figure}
    \centering
\begin{tikzpicture}[scale=0.9]

\begin{axis}[
colorbar,
colorbar style={title={$G(x)$}},
colormap/viridis,
legend cell align={left},
legend style={fill opacity=0.8, draw opacity=1, text opacity=1, draw=white!80!black},
point meta max=7,
point meta min=-5.24989587671803,
tick align=outside,
tick pos=left,
x grid style={white!69.0196078431373!black},
xlabel={$x_1$},
xmin=-1, xmax=1,
xtick style={color=black},
y grid style={white!69.0196078431373!black},
ylabel={$x_2$},
ymin=-1, ymax=1,
ytick style={color=black}
]
\addplot[forget plot] graphics [includegraphics cmd=\pgfimage,xmin=-1, xmax=1, ymin=-1, ymax=1] {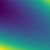};
\addlegendentry{$Z$}
\addplot [thick, black]
table {%
-1 0.4161
1 -0.4161
};
\addlegendentry{$Y$}
\addplot [thick, white]
table {%
-0.4161 -1
0.4161 1
};

\end{axis}

\end{tikzpicture}
    \caption{The active (white) and inactive (black) subspaces of the function $G$.}
    \label{fig:GAS}
\end{figure}

Then, to fit a GP on the data, the library GPy~\cite{gpy2014} is used. On \cref{fig:gprfittingA}, we can see a sampling of values of $G$ projected over the active subspace, and on \cref{fig:gprfittingB}, the resulting GPR over the active subspace with its uncertainty.

\begin{figure}
    \centering
    \subfloat[Projected sample.]{
\begin{tikzpicture}[scale=0.7]

\begin{axis}[
tick align=outside,
tick pos=left,
x grid style={white!69.0196078431373!black},
xlabel={\(\displaystyle y\)},
xmin=-1.31604095320133, xmax=1.33592707765924,
xtick style={color=black},
y grid style={white!69.0196078431373!black},
ylabel={$G(W_1 y + W_2 z)$},
ymin=-4.50014955871495, ymax=5.97907269944079,
ytick style={color=black}
]
\addplot [draw=customOrange, fill=customOrange, mark=*, only marks]
table{%
x  y
-0.896354815018374 2.21936306214879
0.958328873280835 1.27927083671019
1.00475132384632 1.216094299873
-0.00199988531103285 -2.82876301541981
-0.497871695958314 -0.492532067260291
-0.589384125045722 -0.293150432563979
-0.365918488745349 -1.74371168481433
0.21485640998409 -2.93804601346311
0.507949431736204 -3.51654693241953
-0.536497803327693 -0.605793474951522
-0.458754922085547 -0.490693111172097
0.849504392722934 -0.680131468145865
-0.584593646427322 0.116571420682747
0.504290278861399 -2.25047649520336
-0.15774820613213 -2.63573094755911
0.298260386827345 -2.67551491190371
-0.612908178756102 -0.120049066986673
0.977360041603049 0.745700913997944
-0.628403479447617 0.0628590945404068
0.415091113688641 -3.10735377027836
-0.387952110600063 -1.73854948984357
0.113700558900072 -3.39318313421917
-0.895424603889343 2.02416922436358
0.167908514561535 -3.28251959412129
-0.201672337111996 -2.79847499495492
-0.358726995969966 -1.30402738381499
0.521133839540499 -2.90103063155212
-0.125397361979572 -3.35204841560536
0.040465609246799 -3.39485601183398
-0.83483884174653 1.64511920110147
-0.511628463294851 -0.530596707995283
0.105863620792241 -3.40069900725959
-0.469675975441869 -1.25159059130108
-0.577214283651354 -0.301196620774643
1.21538307625648 3.7217311394702
-0.0164131062361433 -3.16916075953824
0.27384913493043 -3.2867646580569
1.05185118007394 1.80174429551333
0.775149098024144 -1.24207025790034
-0.373041408762894 -1.87287490204434
-0.434268926330669 -1.85484684083206
0.36279249979727 -2.94448524884829
0.182945660171615 -3.75325404425542
-0.593691657076913 -0.445339514887755
0.111998665535661 -3.73026556589926
0.209608755366722 -2.98964507088587
0.945884832329903 0.330352351928085
0.202772654216138 -2.4003970374664
-0.3748269979361 -1.67390917830258
0.866737856563326 -0.831598474459232
0.136016478639981 -3.62016542590872
0.164764123525743 -2.87030561424427
0.154424175791631 -3.12039059032064
-0.288111102319706 -1.85291849117085
-0.530610482554637 -0.546100749490299
0.10807864410496 -2.92900471592143
-0.635311999979987 -0.0978722826615251
-0.684104944689398 0.450391752243918
0.783905592785444 -1.13917208645544
-1.1839339280367 5.3576290421284
-0.0160592428507378 -2.76869913185942
-0.465358148546662 -0.712800205542206
0.481675340088552 -2.90149695387648
-0.866015420835607 1.69078971166557
1.09412103543425 1.91851151514207
-0.0860729999116352 -2.64756967133453
0.0198369086059331 -2.67892390549918
-0.676810453636899 0.0485977215669822
-0.561851057917979 -0.172626252097177
-0.00930814400451641 -3.16302881587854
0.622938625531825 -1.13678519197821
0.273821375579376 -3.06052275901985
0.274658026712652 -3.94086214246189
-0.426140222561861 -1.48388787183688
-0.231540315241202 -1.98033861732329
-0.141947883207986 -2.1442067433601
0.0443175812412928 -2.85510378030043
0.686372139119841 -1.27914757490729
0.223011466348999 -3.10634108037721
-0.233388129881782 -2.42181919265053
0.353157931045079 -2.83368888654119
0.903308725234059 0.562528603801301
0.446059520919567 -3.02917032941068
-0.438065357906581 -0.395639830130667
-1.16260752925538 5.04171113377546
0.0503995245586922 -2.59725548723213
-0.150660341975792 -3.01431296197861
0.193561992424352 -4.02382127425332
-0.552537876489086 -0.0348570003288181
0.191235539635831 -3.45537152652943
-0.0373318917818635 -2.95917868759154
-0.629406248753049 0.470074175195
-0.215759320045503 -1.56243637346557
0.0435358869652681 -3.11351418964035
-0.473356674824549 -0.952247879963898
0.178377945333337 -3.20332778579723
0.113971758722043 -3.04745953180706
0.363775627365904 -3.05211024113876
-0.492621006400747 -0.633866342416072
0.809891910460584 -0.772799123558557
-0.799426303130836 1.11899093193062
-0.00588254731324767 -2.85598692016639
0.000674586459264048 -2.83766044437845
-0.46423926913939 -0.874965641365952
0.612037549699219 -2.57485844373961
0.387647665771969 -3.56749318243993
-0.0382170699942074 -3.05319637315975
0.368406597832078 -3.33250193453465
-0.0309756971797778 -3.02112397273751
-0.0153270546223909 -2.85763297240107
-0.156494003603603 -2.38238206784265
-0.147665905441283 -3.10051295421231
-0.227185019062489 -2.57638971719377
0.473450778545699 -3.83044614727785
-1.19549695179858 5.50274441497917
-0.424755836708129 -1.1643574944492
-1.09681583595625 4.2734205706414
0.594376626131925 -1.82263300493664
-0.160275294704161 -2.42457467574241
0.617148784264985 -1.97890476545644
-0.771540393204139 1.10026917136554
1.14466876930252 2.81861619008238
-0.668261899933057 0.31093145941697
0.782097999648416 -1.36952842844853
0.652286773931626 -1.46867986786581
0.841897959878806 -0.552769848914778
0.932543899237894 0.362629906042375
0.055276140201119 -2.71476737631375
0.58406425757283 -2.60446932613087
0.227292010598955 -2.93311789278834
-0.625833836936229 -0.0400366361429132
0.7312937234289 -1.78835076210175
0.528100112284025 -2.56777167624501
0.331866678273653 -3.37797901295465
0.566632729944092 -3.09795935190918
0.344293271309576 -2.35346796385568
0.0547049984338466 -2.90795663148535
-0.0451819466883624 -2.28740914050321
0.217440224093838 -3.63584231011414
0.463587939074872 -2.75524918286356
-0.760845380273871 1.17491873775425
0.445612315304008 -3.00277252148109
0.762804307161574 -1.4527177603764
-0.236473350970921 -2.27503145321372
-0.208665989036195 -2.58411899366968
-0.502226925125006 -0.960198383235663
-0.492486012739501 -1.15231093619524
0.195440326403295 -3.1818759639846
-0.193159252045625 -2.20555874729704
-0.329136554730203 -2.34569034311863
};
\end{axis}

\end{tikzpicture}\label{fig:gprfittingA}}
    \quad
    \subfloat[Noisy GPR fit.]{\input{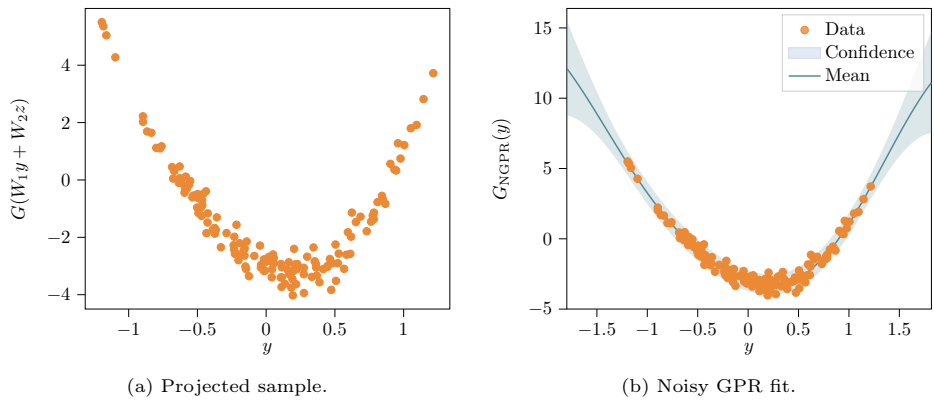}\label{fig:gprfittingB}}
    \caption{(a) Sampling of $G$ projected over the active subspace. (b) The GPR fitted over the active subspace.}
    \label{fig:gprfitting}
\end{figure}

We will restrict ourselves to the case where $G$ would be an inequality constraint in an optimization problem, \textit{i.e.} $G(x)\leq 0$, and analyze the feasible set of the conservative GPR. The resulting feasible sets can be seen on \cref{fig:cons}. The surrogate models for the Chernoff bound and bootstrap approaches were obtained after 7 and 4 bisection iterations respectively, with $\tau=0.95$, $\beta_m = 10$ and $\delta = 0.01$. It can be seen that using the original surrogate $G_\mathrm{NGPR}$ could have led to some optimal but unfeasible points, which is not the case with the biased GPR model, because it is completely enclosed within the original admissible space (for the first approach). 

\begin{figure}
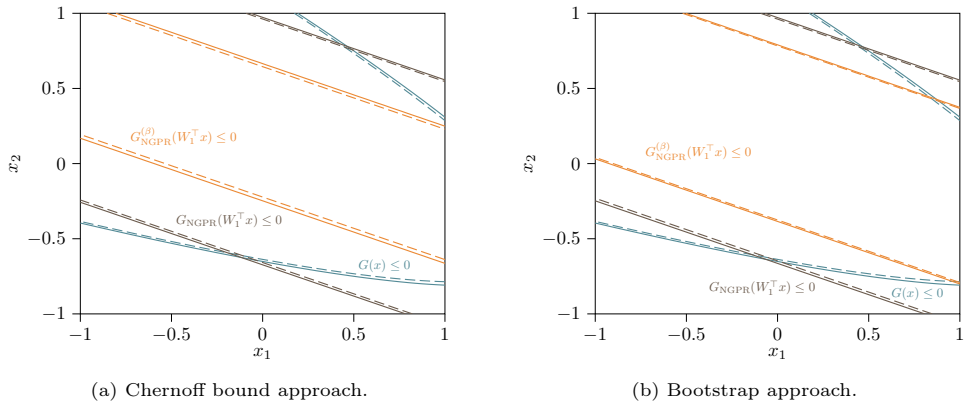

    \centering
    \subfloat[Chernoff bound approach.]{\input{cons}\label{fig:consChernoff}} \qquad 
    \subfloat[Bootstrap approach.]{\input{consboot}\label{fig:consBoot}}
    \caption{Constraints to be enforced when pulling back the solution to the fullspace. The exact constraints are in black, the GPR constraints are in red and the biased constraints (with $\tau = 0.95$) are in blue. The dashed lines indicates the inside of the feasible region.}
    \label{fig:cons}
\end{figure}

\subsection{Limitations}

For functions with too much variation outside of the active subspace, to ensure that $G^{(\beta)}_\mathrm{NGPR}(y) \geq G(W_1y+W_2z)$ with a high probability, the bias would need to be very high, sometimes so much that $G^{(\beta)}_\mathrm{NGPR}(W_1^\top x)>0$ a.e., making the problem unfeasible. 
The intuition that we can get from this is that a function that is able to be represented quite accurately in the active subspace will also admit a CASM surrogate representing adequately the feasible space.

If the number of training samples is relatively small, the number of samples to construct bootstrap distributions will also be quite small, making the estimation of conservativeness probability less precise. Other methods to further reduce the variance of MC estimators could be used.

We want to study the degree to which the Chernoff bound underestimates the decay of the probability, making constraints too tight and solutions suboptimal. To test this, we sampled 10000 points at random and computed the proportion of points where the approximated constraint overestimates the exact constraint (the conservativeness probability). What we see (\cref{tab:conserv}) is that the Chernoff bound is not tight enough and even for relatively low $\tau$ values, the conservativeness probability of the result is very high. The estimation of the success probability by bootstrap is clearly better. $\tau = 0.25$ does not work for the bootstrap method because the base conservativeness probability ($\beta = 0$) of $G$ is higher than $40\%$. This can be detected automatically if the bias value is very small in proportion to the average $S^{(\beta)}$ value. We also compute the unfeasibility ratio ($UR$), which is the proportion of points violating the exact constraint within the feasible points according to the approximate constraint,
\begin{align*}
UR = \Prb\qty[G(x) \geq 0 \left| G^{(\beta)}_{\mathrm{NGPR}}(x) \leq 0\right.] \approx \frac{\#\qty{G^{(\beta)}_{\mathrm{NGPR}}(x) \leq 0\qq{AND} G(x) \geq 0 } }{ \#\qty{G^{(\beta)}_{\mathrm{NGPR}}(x) \leq 0}}.
\end{align*}
As seen on \cref{fig:consChernoff}, with $\tau=0.95$, the blue region is completely enclosed within the black region, and no such point exists. For other cases (\cref{fig:consBoot}, \cref{fig:conslowA}), the regions of unfeasibility are very small, with a probability of less than $1\%$ of being infeasible. The $UR$ for the bootstrap approach with $\tau = 0.5$ (\cref{fig:conslowB}) is a bit higher ($5\%$), but that is because the bootstrap estimator of the probability is more precise, thus a conservativeness probability of $50\%$ is quite low and more points violate the constraint. Nevertheless, the method has shown it can ensure that the optimal solution in the active subspace will also satisfy the original constraint when pulled back to the full space in some scenarios.
\begin{table}[]
    \centering
    \begin{tabular}{ccccc}
         & \multicolumn{2}{c}{Observed Cons.} & \multicolumn{2}{c}{$UR$} \\
         \hline
         Target Cons. ($\tau$) & Chernoff & Bootstrap & Chernoff & Bootstrap  \\
         \hline
         \hline
         0.95 & 0.99 & 0.95 & 0.000 & 0.002\\
         \hline
         0.50 & 0.92 & 0.57 & 0.002 & 0.049\\
         \hline
         0.25 & 0.87 & $-$ & 0.009 & $-$\\
         \hline
    \end{tabular}
    \caption{Observed conservativeness and unfeasibility for many $\tau$ values, for both CASM approaches (Chernoff bound and bootstrap).}
    \label{tab:conserv}
\end{table}
\begin{figure}
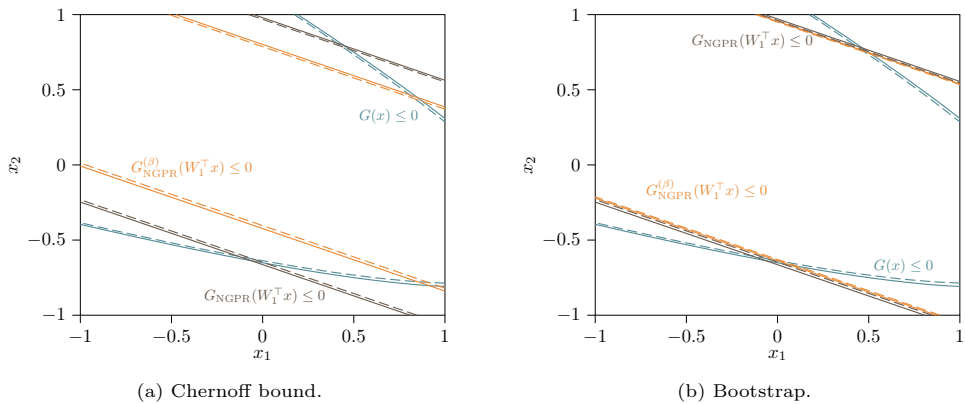

    \centering
    \subfloat[Chernoff bound.]{\input{cons05}\label{fig:conslowA}} \qquad 
    \subfloat[Bootstrap.]{\input{cons025}\label{fig:conslowB}}
    \caption{Feasible sets obtained for $\tau = 0.5$ for the different approaches.}
    \label{fig:conslow}
\end{figure}

\subsection{Two-Material Optimal Distribution Problem}
This section is completed by applying the method to a classical toy problem in computational mechanics, originally credited to Céa and Malanowski \cite{cea_1970}. The \textit{transmission} problem  consists of computing the diffusion of a scalar field $u$ (\textit{e.g.} heat) over a domain $\Omega$, which is partitioned into two subdomains $\Omega_1$ and $\Omega_2$ with different diffusion coefficients $k_1$ and $k_2$, subject to a flux continuity condition at their interface \cite{delfour_2011}:
\begin{align*}
    \begin{cases}
        -k_1 \Delta u = f &\qq{in} \Omega_1\\
        -k_2 \Delta u = f &\qq{in} \Omega_2\\
        \qquad u = 0 &\qq{on} \partial \Omega\\
        k_1\pdv{u}{n_1} + k_2\pdv{u}{n_2}=0 &\qq{on} \partial \Omega_1 \cap \partial \Omega_2,
    \end{cases}
\end{align*}
where $f$ is a heat source distribution and $n_1$ and $n_2$ denote the outward boundary normals for each subdomains. By setting $\theta$ as the binary indicator function of $\Omega_1$, this problem can be reformulated in variational form as
\begin{align}
    \int_\Omega (k_1\theta + k_2(1-\theta))\nabla u \cdot \nabla v \dd x= \int_\Omega fv \dd x
    \label{eq:weakdiffusion}
\end{align}
for every test function $v\in H_0^1(\Omega)$. The design concern arising from this problem is to determine the distribution of the two materials in the domain, \textit{i.e.} determine the indicator function $\theta$, according to some objective and constraints \cite{Thurier_Lesieutre_Frecker_Adair_2019,park_conceptual_2019}. For instance, one could want to minimize the volume of the first material (\textit{e.g.} for economical purposes) while ensuring that the total thermal energy over the domain stays below a given threshold $E_\mathrm{max}$. The thermal energy (or heat compliance) is
\begin{align*}
    E(\theta) = \frac{1}{2}\int_\Omega  (k_1\theta + k_2(1-\theta)) \abs{\nabla u}^2 \dd x,
\end{align*}
and the volume of the first material is 
\begin{align*}
    V(\theta) = \int_\Omega \theta \dd x.
\end{align*}
To solve numerically \eqref{eq:weakdiffusion}, the finite element (FE) method is used. Then, a piecewise constant approximation of $\theta$ is made to solve the optimization problem. The design variable $\theta_i \in [0,1]$ is used to represent the value of $\theta$ over the $i$-th element. Note that the design variables are allowed to take non-integer values in opposition to $\theta$, to simulate for mixture of materials in certain regions too refined to be captured by the resolution of the FE mesh.

We want to solve the following design optimization problem (of the same form as \eqref{eq:nlproblem}) using the CASM approach:
\begin{align}
    \min_{\theta \in [0,1]^D} V(\theta) \qq{s.t.} E(\theta) \leq E_\mathrm{max}\qq{\textit{\&}}\text{eq. \eqref{eq:weakdiffusion}},
    \label{eq:minvol}
\end{align}
where $D$ is the number of elements in the FE mesh. As in \cite{delfour_2011}, $\Omega = [-1,1]^2$, $k_1=2$, $k_2=1$ and $f(x,y) = 56(1-\abs{x}-\abs{y})^6$. Here, the problem is discretized with a structured mesh of $D=512$ triangular second-order Lagrange elements. According to prior numerical experiments, an energy threshold of $E_\mathrm{max}=1.35$ is chosen.

Since the objective is linear, a one-dimensional AS would be sufficient to capture all of the function's variation. To know how many dimensions are necessary to approximate accurately the energy constraint, an analysis of the eigenvalues of the covariance matrix is necessary. First, a rescaling is applied to map $[-1,1]^D$ to $[0,1]^D$, because the AS theory is applicable only to the former. On \cref{fig:eigenEnergy}, we see that after the first eigenvalue, there is a significant drop in the magnitude of the spectrum. We thus choose $d=1$ for the dimension of the AS. The first eigenvector is displayed on \cref{fig:eigenvector}. The annular region around the center seems to influence more the heat compliance of the design than the rest of the domain.

\begin{figure}
    \centering
    \subfloat[10 largest eigenvalues.]{\begin{tikzpicture}[scale=0.7]
  \begin{semilogyaxis}[
    xlabel={$i$},
    ylabel={$\lambda_i$}
    ]
    \addplot[mark=*, color=customTeal] coordinates {
      (0,0.000882448329127676)
(1,1.5953761821851023e-06)
(2,1.489499783719553e-06)
(3,1.383757980590466e-06)
(4,1.2411339490419155e-06)
(5,1.1333426582558588e-06)
(6,1.0449642267748638e-06)
(7,1.018367092349931e-06)
(8,8.842110106606098e-07)
(9,8.625994793559538e-07)
    };
  \end{semilogyaxis}
\end{tikzpicture}}
    \subfloat[Complete ordered spectrum.]{\input{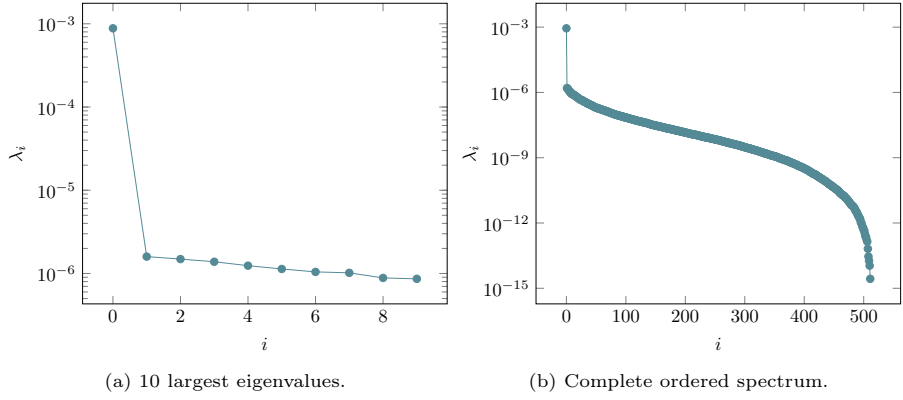}}
    \caption{Eigenvalues of the gradient's covariance matrix for the thermal energy function.}
    \label{fig:eigenEnergy}
\end{figure}

\begin{figure}
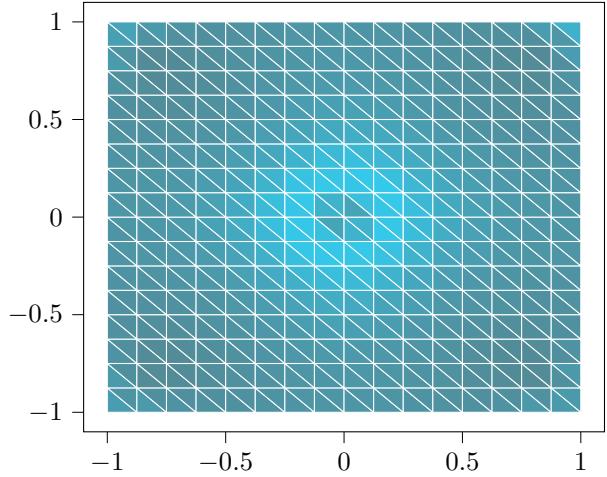

    \centering
    \include{eigenvector}
    \caption{First eigenvector of the covariance matrix. The $\theta_i$ values are ranging from 0.5 (darker) to 0.56 (lighter).}
    \label{fig:eigenvector}
\end{figure}

Then, the profile of the constraint along the chosen active subspace is determined, as shown on \cref{fig:energyprofile}, using 50 samples. The data seems to be well approximated by a linear surrogate model, so this will be used for this case instead of GPR. After applying \cref{algo:casmsamp}, we notice that although it generally does not underestimate significantly the target probability, it is much harder to obtain precise probability estimations for $\tau$ close to 1 for the problem at hand. Further experiments are conducted using \cref{algo:casm}.

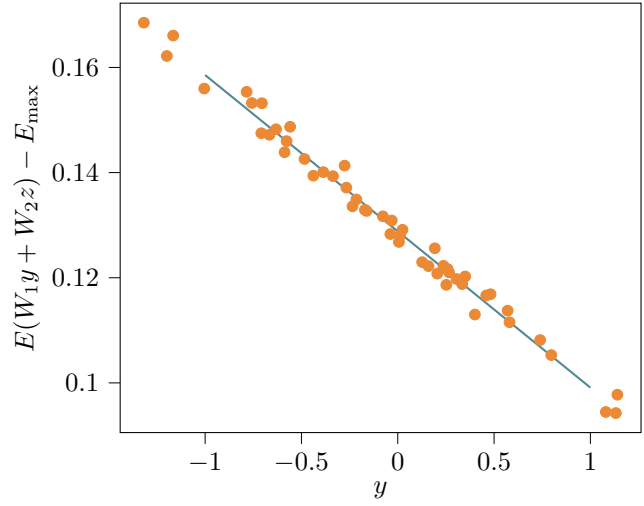
\begin{figure}
    \centering
\begin{tikzpicture}

\definecolor{color0}{rgb}{0.12156862745098,0.466666666666667,0.705882352941177}

\begin{axis}[
tick align=outside,
tick pos=left,
x grid style={white!69.0196078431373!black},
xlabel={\(\displaystyle y\)},
xmin=-1.4410749426897, xmax=1.26297073399089,
xtick style={color=black},
y grid style={white!69.0196078431373!black},
ylabel={\(\displaystyle E(W_1 y + W_2 z)-E_\mathrm{max}\)},
ymin=0.0905476073543092, ymax=0.172226756738602,
ytick style={color=black}
]
\addplot [draw=customOrange, fill=customOrange, mark=*, only marks]
table{%
x  y
-0.17038549517073 0.132906651872292
0.258848676831106 0.121652063151729
-0.438109604149488 0.139409353939485
0.252072345626697 0.118649419522577
0.304791830264462 0.119764791254712
-0.214917902017474 0.134898379140249
-1.16585504977959 0.166063446288133
-0.266730282452341 0.137135980059573
0.796831751123698 0.105288953163834
-0.705396571497278 0.153200896243329
-1.31816377556785 0.168514068130225
1.14005956686904 0.0977518372025712
-0.784558067644902 0.155363830232904
0.159462648262457 0.122207168336316
0.205054879615362 0.120776808823785
-0.587594800207045 0.14387042885749
0.26590152966497 0.121026517822624
0.00606297334119027 0.126805053253108
-0.483962088256118 0.142586023556913
1.1321676534527 0.0942602959626861
-0.66643627377519 0.147192151363414
0.236983511746583 0.122272468938389
-0.757363870404909 0.153242678764843
0.580228916629032 0.111531806286131
0.0246357396185034 0.129128533721324
0.192385416173853 0.125609327703433
0.350256485565497 0.120248195692379
-0.576820860981051 0.14598472191154
-0.0385205225800752 0.128311575163893
-1.19876021757667 0.162179585448306
0.481776496447627 0.11686870869278
-0.161748639309009 0.132711789048346
-0.0771812195121219 0.131690495220655
1.08037886610362 0.0944652584340389
0.012223645243117 0.128012256423258
0.570488663548654 0.113745561079605
-0.558721422955762 0.148711020989989
-0.70842151318309 0.147493702756629
-0.386353914379966 0.140091250225116
0.126772136249031 0.122971285843776
-0.336383774070211 0.13933007201691
-0.632096928399709 0.148246067778219
0.334185125524124 0.118787835080006
-1.00450736905449 0.155987734587488
-0.275915536650108 0.141324098483092
0.459220627207626 0.11663455201053
0.739205183517504 0.108150709165265
-0.0313085263519704 0.130892126569384
0.400349550963802 0.113000186597256
-0.234925881633035 0.13358078763141
};
\addplot [thick, customTeal]
table {%
-1 0.158554416909199
-0.959183673469388 0.157341101612316
-0.918367346938776 0.156127786315433
-0.877551020408163 0.154914471018549
-0.836734693877551 0.153701155721666
-0.795918367346939 0.152487840424783
-0.755102040816326 0.1512745251279
-0.714285714285714 0.150061209831016
-0.673469387755102 0.148847894534133
-0.63265306122449 0.14763457923725
-0.591836734693878 0.146421263940367
-0.551020408163265 0.145207948643483
-0.510204081632653 0.1439946333466
-0.469387755102041 0.142781318049717
-0.428571428571429 0.141568002752834
-0.387755102040816 0.140354687455951
-0.346938775510204 0.139141372159067
-0.306122448979592 0.137928056862184
-0.26530612244898 0.136714741565301
-0.224489795918367 0.135501426268418
-0.183673469387755 0.134288110971534
-0.142857142857143 0.133074795674651
-0.102040816326531 0.131861480377768
-0.0612244897959184 0.130648165080885
-0.0204081632653061 0.129434849784001
0.0204081632653061 0.128221534487118
0.0612244897959182 0.127008219190235
0.102040816326531 0.125794903893352
0.142857142857143 0.124581588596469
0.183673469387755 0.123368273299585
0.224489795918367 0.122154958002702
0.265306122448979 0.120941642705819
0.306122448979592 0.119728327408936
0.346938775510204 0.118515012112052
0.387755102040816 0.117301696815169
0.428571428571428 0.116088381518286
0.469387755102041 0.114875066221403
0.510204081632653 0.113661750924519
0.551020408163265 0.112448435627636
0.591836734693877 0.111235120330753
0.63265306122449 0.11002180503387
0.673469387755102 0.108808489736987
0.714285714285714 0.107595174440103
0.755102040816326 0.10638185914322
0.795918367346939 0.105168543846337
0.836734693877551 0.103955228549454
0.877551020408163 0.10274191325257
0.918367346938775 0.101528597955687
0.959183673469388 0.100315282658804
1 0.0991019673619207
};
\end{axis}

\end{tikzpicture}
    \caption{Constraint function profile when restricted to a one-dimensional AS, with a linear fit.}
    \label{fig:energyprofile}
\end{figure}

\begin{figure}
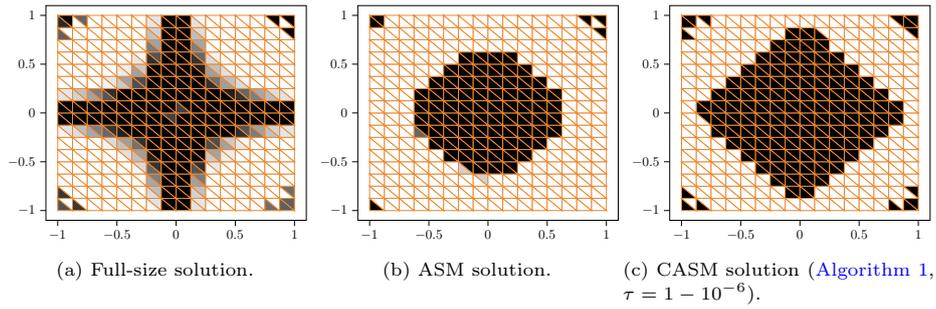

    \centering
    \subfloat[Full-size solution.]{\input{optimaltopEXACT}}
    \subfloat[ASM solution.]{\input{solAS}}
    \subfloat[CASM solution (\cref{algo:casm}, $\tau=1-10^{-6}$).]{\input{solCAS}}
    \caption{Comparison between the material distributions obtained for the full-size problem \eqref{eq:minvol}, the AS reduced problem and the CASM reduced problem. Black ($\theta=1$) and white ($\theta=0$) regions have respectively diffusion coefficients $k_1$ and $k_2$.}
    \label{fig:optimaltopology}
\end{figure}

\begin{table}
    \centering
    \begin{tabular}{cccc}
        Optimization Problem & Bias ($\beta$) & Objective Minimum & Exact Constraint Violation (\%) \\
        \hline\hline Full-size & $-$ & 1.5979978209236378 & $6.53\times10^{-6}$\\
        \hline ASM & 0 & 1.2262997535329518 & 10.25 \\
        \hline CASM \ref{algo:casm} ($i=3$) & 0.07080078125 & 1.5280582174298663 & 4.12\\
        \hline CASM \ref{algo:casm} ($i=4$) & 0.0927734375 & 1.6306459831326832 & 2.62\\
        \hline CASM \ref{algo:casm} ($i=5$) & 0.1171875 & 1.7524308196234444 & 0.19\\
        \hline CASM \ref{algo:casm} ($i=6$) & 0.13916015625 & 1.8673515105314 & 0\\
        \hline
    \end{tabular}
    \caption{Optimal solutions obtained and their feasibility with respect to the exact constraint for the different methods used. CASM \ref{algo:casm} refers to \cref{algo:casm}.}
    \label{tab:optimresults}
\end{table}

We see on \cref{fig:optimaltopology} the comparison between the different optima found. The MMA optimization solver (Method of Moving Asymptotes, originally developed by Krister Svanberg \cite{Svanberg_1987}) from the library NLopt \cite{Jonhson_2022} was used for all cases. The optimal values found and the constraint violations are shown in \cref{tab:optimresults}. Experiments are done for $\tau = 1-10^{-i}$ with tolerances $10^{-i-1}$ for increasing $i$ values. What can be observed is that using the standard AS method for the surrogate, the optimum found is better, but it violates the exact thermal energy constraint by more than 10\%. By using conservative surrogates, there is still a violation, but it is significantly smaller (less than 5\%), meaning that the optima is located on a section of the boundary that is not included in the exact feasible region. This improvement is attained with virtually no additional computational cost when compared to the regular AS method. Indeed, the computation of the bias is extremely quick since it does not require any new exact constraint evaluation. Because of this, and the fact that the optimization problem has been reduced to a one-dimensional problem, it is not expensive to iterate over multiple increasing values of $\tau$ until the constraint is completely satisfied.

As seen in \cref{tab:optimresults}, for $\tau=1-10^{-6}$, the optimal solution found does not violate the exact constraint. This solution is displayed in \cref{fig:optimaltopology}. Although it is suboptimal with respect to the full-size problem, the conservativeness level of the approximation is high enough to ensure that the solution is feasible with respect to the exact constraint. It is a possibility that increasing the number of dimensions in the active subspace would reduce the suboptimality of the solution for the biased problem by allowing for more degrees of freedom, but because of the linear nature of the active subspace method, it would likely fail to approximate correctly an admissible set bounded by a highly nonconvex boundary.

To confirm that the first eigenvector captures correctly the essence of the full-size optimization problem, by imposing a conservativeness probability of $\tau=1-10^{-10}$, we recover the same cross-shaped pattern that was observed in the optimal design for the original problem (\cref{fig:optresulttight}). This means that the arms of the cross have a less significant impact on the heat compliance of the design, but for a much more strict set of feasible designs, their contribution is non-negligible.

\begin{figure}
    \centering
    \input{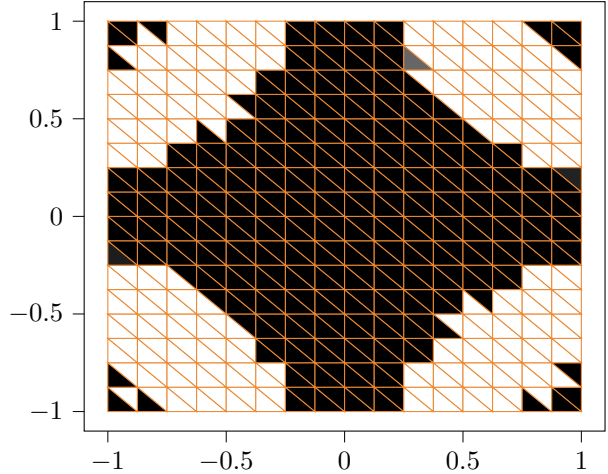}
    \caption{Optimal result for the biased problem with $\tau=1-10^{-10}$.}
    \label{fig:optresulttight}
\end{figure}

\section{Conclusion}
A method to ensure conservativeness of optimization constraints in the ASM framework has been presented. After ensuring that the expected difference between the surrogate and the real constraint is positive, the bias is iteratively refined to make it optimal according to the user desired success probability level by using a bisection algorithm and concentration inequalities. This is in accordance with the theoretical results proven, which indicates that the bias should be high enough to ensure conservativeness, but also not too large to minimize the MSE between the real constraint and the surrogate. 

To estimate statistical quantities of the distribution of $S^{(\beta)}$, bootstrapping is used. The training data for the surrogate is reused in a way to minimize the number of evaluations of expensive simulation models.

After showing that CASM works on a toy problem, we explored its limitation. First, there is a potential of making the problem infeasible if the constraint function has still a lot of variation outside of the active subspace. Then, the concentration inequality used underestimates the real decay of the conservativeness probability, thus making the bias potentially larger than it needs to be to attain the user-defined threshold. As mentionned before, this decreases the quality of the surrogate as an approximation. Future work could try to study better how to accurately bound the tails of $S^{(\beta)}$. Fortunately, the second approach using only bootstrap to estimate the probability of success is more precise and even less costly. 

The method also has shown some usefulness in a concrete design optimization problem, by attenuating exact constraint violations in a significant way. The results could potentially be further improved by using a larger initial training sample to get a more precise estimation for the conservativeness probability, and by allowing for a larger number of dimensions in the active subspace.

All the Python code used to generate the results is available on Github (\texttt{paluneau/CASM}) and archived through Zenodo \cite{Luneau_2024}.

\section*{Acknowledgements}

The author would like to thank Pr. Jean Deteix and Pr. Etienne Marceau for their advice while writing this paper. This work was realized while the author was funded by a graduate scholarship of the \textit{Institut des sciences mathématiques} (ISM) in Montreal and a Natural Sciences and Engineering Research Council of Canada (NSERC) grant under Pr. Jean Deteix [grant number CRDPJ-501716-2016].

\section*{Declaration of Competing Interest}
The authors declare that they have no competing interests (financial or personal) that could have appeared to influence the research presented in this paper.

\bibliography{ref}

\end{document}